\documentclass[a4paper, 11pt, abstracton, DIV=10]{scrartcl}

\usepackage[utf8]{inputenc}
\usepackage[T1]{fontenc}
\usepackage[english]{babel}
\usepackage{graphicx}
\usepackage{xcolor}
\usepackage{booktabs}
\usepackage{enumerate}
\usepackage{todonotes}
\usepackage{amsmath, amsfonts, amssymb,amsthm}
\usepackage[colorlinks=true, linkcolor=blue, filecolor=magenta, urlcolor=cyan, citecolor=gray]{hyperref}
\usepackage{cleveref}
\crefname{chapter}{Chapter}{Chapters}
\crefname{section}{Section}{Sections}
\crefname{subsection}{Section}{Sections}
\crefname{subsubsection}{Section}{Sections}
\crefname{figure}{Figure}{Figures}
\crefname{table}{Table}{Tables}
\numberwithin{equation}{section}
\theoremstyle{definition}

\crefname{question}{Question}{Questions}
\newtheorem{defi}{Definition}[section]
\crefname{def}{Definition}{Definitions}

\crefname{ex}{Example}{Examples}
\theoremstyle{plain}
\newtheorem{thm}[defi]{Theorem}
\crefname{thm}{Theorem}{Theorems}
\newtheorem{conj}{Conjecture}[section]
\crefname{conj}{Conjecture}{Conjectures}

\crefname{lemma}{Lemma}{Lemmas}
\newtheorem{cor}[defi]{Corollary}
\crefname{cor}{Corollary}{Corollaries}
\newtheorem{claim}[defi]{Claim}
\crefname{claim}{Claim}{Claims}

\crefname{prop}{Proposition}{Propositions}

\crefname{obs}{Observation}{Observations}
\newtheorem{prob}[conj]{Problem}
\crefname{prob}{Problem}{Problems}
\theoremstyle{remark}
\newtheorem{rmk}[defi]{Remark}
\crefname{rmk}{Remark}{Remarks}

\newcommand{\R}{\mathbb{R}}

\newcommand{\N}{\mathbb{N}}

\newcommand{\e}{\varepsilon}

\newcommand{\norm}[1]{\left\|#1 \right\|}

\newcommand{\diam}[1]{\mathrm{diam}(#1)}

\newcommand{\circr}[1]{\mathrm{cr}(#1)}

\author{Jan Corsten\thanks{Department of Mathematics, LSE, Email: \href{mailto:j.corsten@lse.ac.uk}{j.corsten@lse.ac.uk}, \href{mailto:n.frankl@lse.ac.uk}{n.frankl@lse.ac.uk}.} \and N\'ora Frankl\footnotemark[2] \footnote{Partially supported by the National Research, Development,
and Innovation Office, NKFIH Grant K119670.}}
\date{}
\title{A note on diameter-Ramsey sets\thanks{To appear in European J. Combin. \textbf{71} (2018), 51--54, DOI: \href{https://doi.org/10.1016/j.ejc.2018.02.036}{10.1016/j.ejc.2018.02.036}. \copyright~2018. This manuscript version is made available under the \href{http://creativecommons.org/licenses/by-nc-nd/4.0/}{CC-BY-NC-ND 4.0} license.}}

\begin{document}
\maketitle
\begin{abstract}
A finite set $A \subset \mathbb{R}^d$ is called \emph{diameter-Ramsey} if for every $r \in \N$, there exists some $n \in \N$ and a finite set $B \subset \mathbb{R}^n$ with $\diam{A}=\diam{B}$ such that whenever $B$ is coloured with $r$ colours, there is a monochromatic set $A' \subset B$ which is congruent to $A$.
We prove that sets of diameter $1$ with circumradius larger than $1/\sqrt{2}$ are not diameter-Ramsey. In particular, we obtain that triangles with an angle larger than $135^\circ$ are not diameter-Ramsey, improving a result of Frankl, Pach, Reiher and Rödl. Furthermore, we deduce that there are simplices which are almost regular but not diameter-Ramsey.
\end{abstract}

\section{Introduction}
In this note, we discuss questions related to \emph{Euclidean Ramsey theory}, a field introduced in \cite{EGMR73} by Erd\H os, Graham, Montgomery, Rothschild, Spencer and Straus. A finite set $A \subset \mathbb{R}^d$ is called \emph{Ramsey} if for every $r \in \N$, there exists some $n \in \N$ such that in every colouring of $\mathbb{R}^n$ with $r$ colours, there is a monochromatic set $A' \subset \R^n$ which is congruent to $A$. The problem of classifying which sets are Ramsey has been widely studied and is still open (see \cite{Handbook} for more details). 

The \emph{diameter} of a set $P \subset \R^d$ is defined by $\diam P := \sup \{ \norm{x-y} : x,y \in P \}$, where $\norm{\cdot}$ denotes the Euclidean norm. Recently,  Frankl, Pach, Reiher and Rödl \cite{frankl2017} introduced the following stronger property. 

\begin{defi} A finite set $A \subset \mathbb{R}^d$ is called \emph{diameter-Ramsey} if for every $r \in \N$, there exists some $n \in \N$ and a finite set $B \subset \mathbb{R}^n$ with $\diam{A}=\diam{B}$ such that whenever $B$ is coloured with $r$ colours, there is a monochromatic set $A' \subset B$ which is congruent to $A$.
\end{defi}

It follows from the definition that every diameter-Ramsey set is Ramsey. A set $A \subset \R^d$ is called spherical, if it lies on some $d$-dimensional sphere and the \emph{circumradius} of $A$, denoted by $ \circr A$, is the radius of the smallest sphere containing $A$.
(Note that if $A$ is spherical and is not contained in a proper subspace of $\mathbb{R}^d$, then there is a unique sphere that contains it.)
In \cite{EGMR73} it was proved that every Ramsey set must be spherical. Our main result states that every diameter-Ramsey set must also have a small circumradius.

\begin{thm}\label{thm:diamramsey}
If $ A \subset \mathbb{R}^d $ is a finite, spherical set with circumradius strictly larger than $ \diam A/\sqrt2 $, then $A$ is not diameter-Ramsey.
\end{thm}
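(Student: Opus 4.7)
The plan is to show that for some integer $r$ depending only on $A$, every finite set $B \subset \R^n$ with $\diam B = D := \diam A$ admits an $r$-colouring with no monochromatic congruent copy of $A$. The colouring will be based on proximity to the centre of the minimum enclosing ball of $B$; Jung's theorem guarantees this ball has radius $r_B < D/\sqrt{2}$, which is strictly less than $\circr A$ by hypothesis, and this gap is what the argument exploits.

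Concretely, for each $B$ let $c_B$ be the centre of its minimum enclosing ball, partition $[0, D^2/2)$ into $r$ equal intervals, and colour $p \in B$ by the interval containing $\norm{p - c_B}^2$. Two points of the same colour then have squared distances to $c_B$ differing by less than $D^2/(2r)$. The core quantitative input is the claim
\[
\delta(A) \; := \; \min_{c \in K}\Bigl(\max_{a \in A} \norm{a - c}^2 \; - \; \min_{a \in A}\norm{a - c}^2\Bigr) \; > \; 0,
\]
where $K = \{c \in \operatorname{aff}(A) : \norm{a - c} \le D/\sqrt{2} \text{ for all } a \in A\}$. The set $K$ is compact and nonempty (it contains the centre of the minimum enclosing ball of $A$, which by Jung has radius strictly less than $D/\sqrt{2}$), and the objective is continuous, so the minimum is attained at some $c^*$. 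If it were $0$, then $\norm{a - c^*}$ would equal a common value $s \le D/\sqrt{2}$ for all $a \in A$, placing $A$ on a sphere of radius $s < \circr A$, a contradiction; hence $\delta(A) > 0$.

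Now I would pick $r > D^2/(2\delta(A))$ and suppose for contradiction that $A' \subset B$ is monochromatic and congruent to $A$. Write $c_B = v + w$ with $v \in \operatorname{aff}(A')$ and $w$ orthogonal to $\operatorname{aff}(A')$. Pythagoras gives $\norm{a - c_B}^2 = \norm{a - v}^2 + \norm{w}^2$, so the range of $\norm{a - c_B}^2$ over $a \in A'$ equals the range of $\norm{a - v}^2$; moreover $\norm{a - v}^2 \le \norm{a - c_B}^2 \le r_B^2 < D^2/2$, so $v$ lies in the analogue of $K$ for $A'$. Since $A' \cong A$ implies $\delta(A') = \delta(A)$, this range is at least $\delta(A)$, contradicting that monochromaticity forces it to be less than $D^2/(2r) < \delta(A)$. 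The main obstacle is the compactness step producing $\delta(A) > 0$: once the quantitative gap $\circr A > D/\sqrt{2}$ is turned into a uniform lower bound on the spread of squared distances to any admissible centre, everything else is a short computation.
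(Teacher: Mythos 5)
Your proof is correct, and it follows the same overall strategy as the paper: colour by distance to a suitable centre, invoke Jung's inequality so that this centre lies within $\diam A/\sqrt{2} < \circr A$ of every point, and use compactness to show that a congruent copy of $A$ cannot be nearly equidistant from any such centre. The difference is in where the compactness lives. The paper proves the more general \cref{thm:main}: it fixes $r < \circr A$, colours the whole ball $B_D(r)$ by distance to the origin, and its key step (\cref{claim:key}) bounds the spread $\max_{x,y\in A'}(\norm{x}-\norm{y})$ below uniformly by compactness of the space of \emph{embeddings} of $A$ into $B_{d+1}(r)$, after first reducing the ambient dimension to $d+1$. You instead make the compactness intrinsic to $A$: you minimise the spread of squared distances over the compact set $K$ of admissible centres inside $\operatorname{aff}(A)$, and then transport the ambient centre $c_B$ into $\operatorname{aff}(A')$ by orthogonal projection, with Pythagoras showing that the projection preserves the spread and only decreases distances (so it lands in the analogue of $K$). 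Your version avoids both the embedding-space compactness and the dimension-reduction step, at the price of the extra projection argument; the paper's version yields the stronger conclusion that the entire ball $B_D(r)$ can be finitely coloured for every $r<\circr A$, which your argument does not state, though it could be adapted to do so. Both correctly reduce the whole problem to the uniformity of the positive lower bound, which is the one point a careless argument would miss.
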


 Frankl, Pach, Reiher and Rödl \cite[Theorems 3 and 4]{frankl2017} proved that acute and right-angled triangles are diameter-Ramsey, while triangles having an angle larger than $150^\circ$ are not. \cref{thm:diamramsey} implies the following improvement.

\begin{cor}\label{cor:triangle}
Triangles with an angle larger than $135^\circ$ are not diameter-Ramsey.
\end{cor}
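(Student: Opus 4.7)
The plan is to derive \cref{cor:triangle} as an immediate consequence of \cref{thm:diamramsey}. Every (non-degenerate) triangle is spherical, since its three vertices lie on a unique circumscribed circle, so the only thing to check is that a triangle with an angle $\alpha > 135^\circ$ has circumradius strictly larger than $\diam{A}/\sqrt 2$.

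First I would identify the diameter: since $\alpha > 135^\circ > 90^\circ$, the triangle is obtuse, and the side opposite $\alpha$ is its longest side, so it equals $\diam A$. Next, I would invoke the law of sines, which gives $\diam A = 2R \sin \alpha$, where $R$ denotes the circumradius. Rearranging yields
\[
R = \frac{\diam A}{2 \sin \alpha}.
\]
The condition $R > \diam A/\sqrt 2$ is therefore equivalent to $\sin \alpha < 1/\sqrt 2$. Since $\sin$ is strictly decreasing on $[90^\circ, 180^\circ]$ and $\sin 135^\circ = 1/\sqrt 2$, any $\alpha \in (135^\circ, 180^\circ)$ satisfies $\sin \alpha < 1/\sqrt 2$, hence $R > \diam A/\sqrt 2$.

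Applying \cref{thm:diamramsey} then concludes that the triangle is not diameter-Ramsey. There is no real obstacle here; the whole content of the corollary is the trigonometric identity $\sin 135^\circ = 1/\sqrt 2$ together with the monotonicity of $\sin$ on the obtuse range, so the proof will be only a few lines.
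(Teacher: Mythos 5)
Your proposal is correct and follows essentially the same route as the paper: the paper likewise observes that the side opposite the obtuse angle $\alpha$ realises the diameter $a$, that the circumradius equals $a/(2\sin\alpha)$ by the law of sines, and that this exceeds $a/\sqrt{2}$ since $\sin\alpha<1/\sqrt{2}$, so \cref{thm:diamramsey} applies. The only difference is that you spell out the monotonicity of $\sin$ on the obtuse range, which the paper leaves as folklore.
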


Let us call a $d$-simplex $ A = \{p_1, \ldots, p_{d+1} \}$ \emph{$\e$-almost regular} if \[ \frac{1}{\binom{d+1}{2}} \sum_{1 \leq i < j \leq d+1} \diam A^2 - \norm{p_i -p_j}^2 \leq \e \cdot \diam A^2.\] In \cite[Theorem 6, Lemma 4.9]{frankl2017} it was further proved that $ \e $-almost regular simplices are diameter-Ramsey for every $ \e \leq 1/\binom{d+1}{2} $. This is a rather small class of simplices since $ 1/\binom{d+1}{2} $ tends to zero, but another corollary of \cref{thm:diamramsey} shows that one cannot hope for much more. 

\begin{cor}\label{cor:simplex}
For every $ d \in \N $ and every $ \e > \sqrt{d}/\binom{d+1}{2} $, there is an $ \e $-almost regular $d$-simplex which is not diameter-Ramsey.   
\end{cor}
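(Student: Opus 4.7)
The plan is to apply \cref{thm:diamramsey}: given $\e > \sqrt d/\binom{d+1}{2}$, it suffices to exhibit a $d$-simplex $A$ with $\diam A = 1$, $\circr A > 1/\sqrt 2$, and the $\e$-almost regular property. I will construct $A$ as a one-parameter family of \emph{isoceles pyramids}. Take $p_1, \dots, p_d$ to be the vertices of a regular $(d-1)$-simplex of edge length $1$ lying in a hyperplane $H \subset \R^d$, and let $p_0$ be the point on the line through the centroid of $\{p_1,\dots,p_d\}$ perpendicular to $H$, at the unique positive height for which $\norm{p_0-p_i} = t$ for all $i \geq 1$; this requires $t^2 > (d-1)/(2d)$. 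Set $A = \{p_0, p_1, \dots, p_d\}$. Then $d$ edges have length $t$ and the other $\binom{d}{2}$ have length $1$, so if $t \leq 1$ one has $\diam A = 1$ and the average squared edge defect equals
\[ \frac{d(1-t^2)}{\binom{d+1}{2}} = \frac{2(1-t^2)}{d+1}. \]

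The main step is to compute the circumradius $R$. The rotational symmetry fixing the axis through $p_0$ and the centroid of $\{p_1,\dots,p_d\}$ forces the circumcentre of $A$ to lie on that axis; equating its distances to $p_0$ and to $p_1$ gives a single linear equation for its position, and back-substitution should yield
\[ R^2 = \frac{t^4}{4\bigl(t^2 - (d-1)/(2d)\bigr)}. \]
(As a sanity check, $t=1$ recovers the regular-simplex circumradius $\sqrt{d/(2(d+1))}$.) The inequality $R^2 > 1/2$ then reduces to $t^4 - 2t^2 + (d-1)/d > 0$, which in the range $t^2 \in [0,1]$ is equivalent to $t^2 < 1 - 1/\sqrt d$. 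By \cref{thm:diamramsey} any such $A$ is not diameter-Ramsey.

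It remains to choose $t$ so that $A$ is additionally $\e$-almost regular, i.e.\ $t^2 \geq 1 - \e(d+1)/2$. All three constraints together require
\[ \max\!\Bigl(1 - \tfrac{\e(d+1)}{2},\; \tfrac{d-1}{2d}\Bigr) \;<\; t^2 \;<\; 1 - \tfrac{1}{\sqrt d}. \]
The first term in the maximum is strictly below the upper bound exactly when $\e > 2/((d+1)\sqrt d) = \sqrt d/\binom{d+1}{2}$, which is our hypothesis; the second term is strictly below the upper bound iff $(\sqrt d - 1)^2 > 0$, i.e.\ $d \geq 2$ (the case $d=1$ is trivial as any segment is diameter-Ramsey). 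The interval is therefore non-empty, and any $t$ in it gives the required simplex. The only step with genuine content is the circumradius computation; after that everything is a one-parameter fit.
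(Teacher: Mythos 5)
Your proof is correct and takes essentially the same approach as the paper: both arguments construct the identical one-parameter family of isoceles pyramids over a regular unit $(d-1)$-simplex and apply \cref{thm:diamramsey}, arriving at the same threshold $\e > \sqrt{d}/\binom{d+1}{2}$. The only difference is the direction of the computation — you fix the apex edge length $t$ and solve for the circumradius (your formula $R^2 = t^4/\bigl(4(t^2-(d-1)/(2d))\bigr)$ and the resulting condition $t^2 < 1-1/\sqrt{d}$ are correct), whereas the paper prescribes the circumsphere of radius $\sqrt{1/2+\delta}$ and reads off the apex edge length as $1-1/\sqrt{d}+O(\delta)$.
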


For $d \in \N$ and $ r \geq 0 $, we denote the closed $d$-dimensional ball of radius $ r $ centred at the origin by $ B_d(r) $.
We will deduce \cref{thm:diamramsey} from the following result.

\begin{thm}\label{thm:main}
For every finite, spherical set $ A \subset \R^d $ and every positive number $ r < \circr A $, there is some $ k = k(A,r) \in \N$ such that the following holds. For every $ D \in \N $, there is a colouring of $ B_D(r) $ with $ k $ colours and with no monochromatic, congruent copy of $A$.
\end{thm}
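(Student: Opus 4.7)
The plan is to colour $B_D(r)$ via $\chi(x) = \lfloor \|x\|^2/\delta \rfloor$ with $\lceil r^2/\delta \rceil$ colours, where $\delta = \delta(A,r) > 0$ will be chosen small enough that no congruent copy $A' = \{p_1, \ldots, p_n\}$ of $A$ contained in $B_D(r)$ can satisfy $|\|p_i\|^2 - \|p_j\|^2| < \delta$ for all pairs $i,j$. Set $\rho := \circr A > r$.

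The key identity uses that $A'$ is spherical: if $c$ denotes the circumcentre of $A'$, which must lie in the affine hull $V'$ of $A'$ (else its orthogonal projection onto $V'$ would give a smaller enclosing sphere), then $\|p_i - c\| = \|p_j - c\| = \rho$ yields $\|p_i\|^2 - \|p_j\|^2 = 2\langle p_i - p_j, c\rangle$. I would then decompose $c = \bar c + u$, where $\bar c$ is the orthogonal projection of the origin onto $V'$ (so $\bar c$ is perpendicular to the linear subspace $V' - p_1 = \mathrm{span}\{p_i - p_j\}$) and $u = c - \bar c$ lies in this subspace. The identity simplifies to $\|p_i\|^2 - \|p_j\|^2 = 2\langle p_i - p_j, u\rangle$, so monochromaticity of $A'$ forces $|\langle p_i - p_j, u\rangle| < \delta/2$ for every pair. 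Since $u$ itself lies in $\mathrm{span}\{p_i - p_j\}$, expanding it in a fixed basis of differences whose Gram matrix is invertible and determined solely by the pairwise distances of $A$, one extracts a bound $\|u\| \leq C_A \delta$, with $C_A = C_A(A)$.

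In the opposite direction, the containment $A' \subset B_D(r)$ forces $\|u\|$ to be \emph{large}: since $\bar c$ is perpendicular to $V' - \bar c$, we have $\|p_i\|^2 = \|\bar c\|^2 + \|p_i - \bar c\|^2$ by Pythagoras, while the triangle inequality gives $\|p_i - \bar c\| \geq \|p_i - c\| - \|u\| = \rho - \|u\|$. Combining these, $r^2 \geq (\rho - \|u\|)^2$ and hence $\|u\| \geq \rho - r > 0$. The two bounds together give $\delta \geq 2(\rho - r)/C_A$, so setting $\delta := (\rho - r)/C_A$ rules out any monochromatic copy and furnishes a valid colouring with $k(A,r) = \lceil C_A r^2/(\rho - r) \rceil$ colours.

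The main obstacle is extracting the constant $C_A$ uniformly in the ambient dimension $D$ and in the particular copy $A'$. The point is that congruences preserve pairwise distances and affine dimension, so a basis of differences chosen once in $A$ corresponds under the congruence to a basis of the same differences in $A'$, with an identical Gram matrix (whose entries $\langle p_i - p_j, p_k - p_l\rangle$ are determined by the squared pairwise distances of $A$); the operator norm of its inverse yields $C_A$, independent of $D$. Once this linear-algebraic bookkeeping is in place, the remaining content is the geometric insight that fitting $A'$ inside a ball of radius strictly less than $\circr A$ forces the circumcentre $c$ to have a substantial tangential component within $V'$, whereas monochromaticity in our colouring forces this tangential component to be near zero.
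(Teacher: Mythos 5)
Your proof is correct, and it arrives at essentially the same colouring as the paper---thin spherical shells about the origin, indexed by $\lfloor \|x\|^2/\delta\rfloor$ rather than the paper's $\lfloor \|x\|/c\rfloor$---but it establishes the crucial gap by a genuinely different argument. The paper's key step is soft: after reducing to ambient dimension $d+1$ by passing to the subspace spanned by $A'$ and the origin, it observes that the space of embeddings of $A$ into $B_{d+1}(r)$ is compact, that $e\mapsto \max_{x,y\in e(A)}(\|x\|-\|y\|)$ is continuous, and that it is strictly positive because $e(A)$ cannot lie on a sphere about the origin of radius at most $r<\circr{A}$; compactness then yields a uniform constant $c>0$ with no explicit value. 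Your route replaces this with an explicit computation: the identity $\|p_i\|^2-\|p_j\|^2=2\langle p_i-p_j,u\rangle$ for the tangential component $u$ of the circumcentre, the upper bound $\|u\|\le C_A\delta$ via the inverse Gram matrix of a fixed basis of differences (legitimately uniform in $D$ and in the copy, since that Gram matrix is a congruence invariant---this also lets you dispense with the paper's dimension reduction), and the lower bound $\|u\|\ge \circr{A}-r$ from Pythagoras and the triangle inequality. What you gain is an effective bound on $k(A,r)$ in terms of $\circr{A}-r$ and the geometry of $A$, which the compactness argument cannot provide; the cost is the linear-algebraic bookkeeping the paper avoids. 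The one thing to tidy is the constant-tracking at the end: monochromaticity gives $|\langle p_i-p_j,u\rangle|<\delta/2$, so fix where the factor $2$ lives in $C_A$ and make sure your final $\delta$ sits strictly below the threshold you derive; this is cosmetic and does not affect correctness.
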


A result of Matoušek and Rödl \cite{MatRodl} shows that the conclusion of \cref{thm:main} does not hold whenever $r > \circr A$. We do not know what happens when $r = \circr A$.

\begin{rmk}
After completing this work, we have learnt that Theorem~\ref{thm:diamramsey} has independently been proved by  Frankl, Pach, Reiher and Rödl, with a similar proof (J\'anos Pach, private communication).
\end{rmk}

\section{Proofs}
\subsection{Proof of \texorpdfstring{\cref{thm:main}}{the main theorem}}
Fix some finite, spherical $A \subset \R^d$ and some positive number $r < \circr A$. The following claim is the key step of the proof.

\begin{claim}\label{claim:key}
There exists a constant $ c = c(A,r) > 0 $ such that for every $D \in \N$ and for every congruent copy $A'$ of $A$ in $B_{D}(r)$ we have $\max_{x,y \in A'} \left( \norm x - \norm y \right) \geq c$. 
\end{claim}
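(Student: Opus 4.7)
The plan is a compactness argument in bounded ambient dimension, combined with the geometric observation that having all points of $A'$ at the same Euclidean norm would contradict $\circr A > r$. First I would reduce the ambient dimension: given any congruent copy $A' \subset B_D(r)$ of $A$, let $V \subset \R^D$ be the linear span of $A'$. Since $0 \in V$, the restriction of the Euclidean norm to $V$ agrees with the ambient norm, and choosing an orthonormal basis of $V$ realises $A'$ as a congruent copy of $A$ inside $B_{\dim V}(r)$ with each individual norm $\|x\|$ preserved. As $\dim V \leq |A|$, it suffices to exhibit, for each $d' \leq |A|$, a positive constant $c_{d'}$ that works in ambient dimension $d'$; the required $c$ is then $\min_{d' \leq |A|} c_{d'}$.

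Next comes the geometric step: I claim that no congruent copy $A' \subset B_D(r)$ of $A$ can have all of its points at the same norm. Suppose for contradiction that $\|x\| = \rho_0$ for every $x \in A'$; then $\rho_0 \leq r$, so $A'$ lies on the origin-centred sphere of radius $\rho_0$ in $\R^D$. Intersecting this sphere with the affine hull $W$ of $A'$ yields, by the orthogonal decomposition of each $y \in W$ along the projection of the origin onto $W$, a sphere in $W$ of radius at most $\rho_0$ that still contains $A'$. Since $W$ is the affine hull of $A'$, the parenthetical remark in the introduction says that $A'$ lies on a unique sphere in $W$, whose radius must therefore be both $\leq \rho_0 \leq r$ and equal to $\circr{A'} = \circr A > r$, a contradiction.

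Finally, I apply compactness. Writing $n = |A|$ and fixing a labelling $A = \{p_1^0,\ldots,p_n^0\}$, for each $d' \leq n$ the set of tuples $(p_1,\ldots,p_n) \in B_{d'}(r)^n$ with $\|p_i - p_j\| = \|p_i^0 - p_j^0\|$ for all $i,j$ is closed in the compact box $B_{d'}(r)^n$, hence compact. The map $(p_1,\ldots,p_n) \mapsto \max_{i,j}\bigl(\|p_i\| - \|p_j\|\bigr)$ is continuous and, by the previous step, strictly positive on this set, so it attains a positive minimum $c_{d'}$ (or is vacuously $+\infty$ if the set is empty). Setting $c := \min_{d' \leq n} c_{d'}$ gives the desired constant. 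The main obstacle is the geometric step, where one crucially uses that $\circr A$ is the radius of the unique circumscribed sphere in the affine hull of $A$ — the exact content of the parenthetical note in the introduction — so that the intersection sphere and the circumscribed sphere of $A'$ in $W$ are forced to coincide.
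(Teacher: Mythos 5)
Your proposal is correct and follows essentially the same route as the paper: reduce to a bounded ambient dimension, observe that a congruent copy of $A$ in $B_D(r)$ cannot lie on a sphere centred at the origin because $\circr{A} > r$, and conclude by compactness and continuity of the norm-spread that it is bounded away from zero. The only differences are cosmetic — you bound the relevant dimension by $|A|$ rather than $d+1$ and spell out, via the affine hull, the circumradius comparison that the paper leaves implicit.
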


\begin{proof} First observe that it is sufficient to prove the claim for $D = d+1$. For $D < d+1 $, this follows immediately from $B_{D}(r) \subset B_{d+1}(r)$, and for $D>d+1$ we can consider the at most $(d+1)$-dimensional subspace spanned by the vertices of $A'$ and the origin.

Let $E=\{e:A\to B_D(r)\} \subset B_D(r)^{|A|}$ be the set of all embeddings of $A$ to $B_D$. It is easy to see that, if $e_1, e_2, \ldots \in E$ and the pointwise limit $e := \lim_n e_n$ exists, then $ e \in E$. Therefore, $E$ is a closed subset of a compact metric space and hence $E$ is compact as well. Define $f:E\to \mathbb{R}$ by \[f(e) := \max_{x,y \in e(A)} \left(\norm x - \norm y \right). \]

Clearly, $f(e)\geq 0$ for every $ e \in E $, and $f(e) = 0$, if and only if $e(A)$ lies on a sphere around the origin. But since $\circr{e(A)} > r$ for every embedding $ e \in E$, this is not the case, and hence $f(e)>0$ for all $e \in E$. Finally, since $f$ is continuous, there is a constant $c >0$ such that $f(e) \geq c$ for all $e \in E$.
\end{proof}

\medskip

  Let $ k = \lfloor r/c \rfloor + 1 $ now, and fix some $ D \in \N $. We will colour points in $ B_D(r) $ by their distance to the origin: Define $ \chi : B_D(r) \to \{0, \ldots, k-1 \} $ by $ \chi(x) = \lfloor 1/c \cdot \norm{x} \rfloor $, and let $A' \subset B_D(r)$ be a congruent copy of $A$. It follows immediately from \cref{claim:key} that there are $x,y \in A'$ with $\norm{x}-\norm{y} \geq c$, and hence $\chi (x) \not = \chi (y)$. This finishes the proof of \texorpdfstring{\cref{thm:main}}{the main theorem}.

\subsection{Implications of \texorpdfstring{\cref{thm:main}}{the main theorem}}
In this section we will deduce \cref{thm:diamramsey} and then \cref{cor:triangle,cor:simplex}. In order to do so we will use the following classical result.

\begin{thm}[Jung's inequality, \cite{Jung1901}]\label{thm:jung}
Every bounded set $ A \subset \R^d $ can be covered by a closed ball of radius $\sqrt{d/(2d+2)} \cdot \diam A$.
\end{thm}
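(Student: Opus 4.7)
The plan is to combine Helly's theorem with a sharp bound on the circumradius of a simplex in terms of its edge lengths. Writing $d_A = \diam A$ and $r = \sqrt{d/(2d+2)} \cdot d_A$, my goal is to find a single point $y$ with $\norm{y-x} \leq r$ for every $x \in A$; equivalently, the family of closed balls $\{B(x,r) : x \in A\}$ should have a common point. Since each such ball is compact and convex, Helly's theorem reduces the problem to showing that any $d+1$ of these balls intersect, which is in turn equivalent to the following key lemma: any finite set $S = \{p_1, \ldots, p_k\}$ with $k \leq d+1$ and pairwise distances $\leq d_A$ can be covered by a closed ball of radius $r$.

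For the key lemma, let $B(c,R)$ be the minimum enclosing ball of $S$. After discarding redundant points, I may assume every $p_i$ lies on $\partial B(c,R)$, and a standard minimality argument then forces $c$ to lie in the convex hull of the $p_i$, so $c = \sum_i \lambda_i p_i$ for nonnegative weights with $\sum_i \lambda_i = 1$. Translating so $c=0$, the identities $\sum_i \lambda_i p_i = 0$, $\norm{p_i} = R$, and $p_i \cdot p_j = R^2 - \tfrac{1}{2}\norm{p_i - p_j}^2$ combine, after a short expansion using $(\sum_i \lambda_i)^2 = 1$, to the clean formula
\[ R^2 = \sum_{i<j} \lambda_i \lambda_j \norm{p_i - p_j}^2. \]
Bounding the right-hand side by $d_A^2 \sum_{i<j} \lambda_i \lambda_j = \tfrac{1}{2} d_A^2 \bigl(1 - \sum_i \lambda_i^2\bigr)$ and invoking the Cauchy--Schwarz estimate $\sum_i \lambda_i^2 \geq 1/k$ (with equality at the barycenter of a regular simplex, which is the extremal configuration) gives $R^2 \leq \tfrac{k-1}{2k} d_A^2 \leq \tfrac{d}{2(d+1)} d_A^2 = r^2$, since the function $k \mapsto (k-1)/(2k)$ is increasing and $k \leq d+1$.

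The main obstacle is the reduction step ensuring that $c$ lies in the convex hull of $\{p_i\}$: without it, the weights $\lambda_i$ needed for the algebraic identity are not available. One justifies it by noting that, if the support points of the enclosing ball lay in an open half-space through $c$, one could translate $c$ slightly toward that half-space and strictly decrease the maximum of $\norm{c-p_i}$, contradicting minimality. Once this geometric fact is established, the rest is a one-line algebraic identity together with Helly's theorem, and the regular simplex appears naturally as the tight extremal case of the inequality.
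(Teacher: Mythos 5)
Your proof is correct. Note that the paper does not prove this statement at all: Jung's inequality is quoted as a classical result with a reference to Jung's 1901 paper, so there is no in-paper argument to compare against. What you have written is the standard modern proof, and all the essential points are in place: the reduction via Helly's theorem to $k \leq d+1$ points (where you correctly invoke compactness of the balls, which is what lets the finite Helly theorem pass to the possibly infinite family $\{B(x,r) : x \in A\}$ via the finite intersection property); the perturbation argument showing that the centre of the minimum enclosing ball lies in the convex hull of the points it touches on the boundary; and the identity $R^2 = \sum_{i<j}\lambda_i\lambda_j\norm{p_i-p_j}^2$ obtained by expanding $0 = \norm{\sum_i \lambda_i p_i}^2$, combined with $\sum_i \lambda_i^2 \geq 1/k$ and the monotonicity of $(k-1)/(2k)$. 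One small presentational point: rather than ``discarding'' the non-boundary points (which begs the question of whether $B(c,R)$ remains their minimum enclosing ball), it is cleaner to let $T$ be the set of points of $S$ on $\partial B(c,R)$, run the perturbation argument on all of $S$ to conclude $c \in \mathrm{conv}(T)$, and carry out the algebra with weights supported on $T$; since $|T| \leq d+1$ and all of $S$ lies in $B(c,R)$ regardless, the bound $R \leq \sqrt{d/(2d+2)}\cdot\diam A$ still covers every point of $S$. This is a cosmetic fix, not a gap.
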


In particular, every finite set $B \subset \R^n$ can be covered by a ball of radius $\diam{B} /\sqrt{2}$, and hence \cref{thm:diamramsey} follows immediately from \cref{thm:main,thm:jung}.

Furthermore, if $T$ is a triangle with an angle $\alpha>135^\circ$ and diameter $a$, it is folklore that the circumradius of $T$ is $a/(2\sin \alpha)>a/\sqrt{2}$. Thus, we obtain \cref{cor:triangle} as a corollary of \cref{thm:diamramsey}.

To prove \cref{cor:simplex}, we show that we can move one vertex of the regular $d$-simplex by just a little bit to obtain a simplex of circumradius strictly larger than $ 1/\sqrt{2} $. We will use the elementary geometric fact that the circumradius of a $d$-dimensional unit simplex is $\sqrt{d/(2d+2)}$.

\begin{proof}[Proof of \cref{cor:simplex}]
Let $\delta >0 $, $r^2 = 1/2 + \delta$, $a^2 = 1/(2d) +\delta $, and define $ H_a := \{ x \in \R^d : x_d = a \}$. Then $ B:=B_d(r) \cap H_a $ is a $(d-1)$-dimensional ball of radius $ \sqrt{r^2 - a^2} = \sqrt{(d-1)/(2d)} $, and hence there is a $(d-1)$-dimensional unit simplex $A'=\{p_1, \ldots, p_d\}$ contained in the boundary of $B$. Finally, let $ A = A' \cup \{p_{d+1}\} $, where $ p_{d+1} = (0, \ldots, 0, r) $.

By construction, we have $ \circr A > 1/\sqrt{2} $, $ \norm{p_i-p_j}^2 = 1 $ for all $ 1 \leq i < j \leq d $, and $ \norm{p_i - p_{d+1}}^2 = 1 - 1/\sqrt{d} + O(\delta) $ for all $ 1 \leq i \leq d$. Hence the theorem follows from \cref{thm:diamramsey} after choosing $ \delta > 0 $ small enough.
\end{proof}

\section{Remarks}

In \cite{frankl2017} it was asked whether there exists an obtuse triangle which is diameter-Ramsey. Although we could not answer this question, we think the answer is no. More generally, we think the following statement is true.

\begin{conj}\label{conj:circumcenter}
A simplex is diameter-Ramsey if and only if its circumcentre is contained in its convex hull.
\end{conj}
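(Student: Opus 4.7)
The conjecture splits into two implications that I would tackle separately. For the forward direction --- that every simplex $A$ with circumcentre $c$ inside its convex hull is diameter-Ramsey --- the natural plan is to adapt the product-plus-Hales--Jewett construction of Frankl, Pach, Reiher and Rödl for acute and right triangles \cite{frankl2017}. The key algebraic input is the identity
\[\sum_i \mu_i \norm{x - p_i'}^2 = \norm{x - c'}^2 + \circr(A)^2,\]
valid for any isometric copy $A' = \{p_1', \ldots, p_{d+1}'\}$ of $A$ with circumcentre $c'$, whenever $c = \sum_i \mu_i p_i$ expresses the circumcentre as an affine combination of the vertices. When $c \in \mathrm{conv}(A)$ the weights $\mu_i$ are nonnegative and sum to one, so this is a genuine weighted average; the positivity is what makes the diameter of a Cartesian-product construction tractable and should allow the Ramsey argument to go through in arbitrary dimension.

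For the reverse direction --- that an obtuse simplex (one with $c \notin \mathrm{conv}(A)$) is not diameter-Ramsey --- the aim is to refine the colouring from \cref{thm:main}. The separation of $c$ from $\mathrm{conv}(A)$ by a hyperplane produces a unit vector $v$ with $\langle v, p_i - c \rangle < 0$ for every $i$, which together with $\norm{p_i - c} = \circr A$ is a rigid one-sided affine constraint on any isometric copy of $A$. The plan is to combine the single-scalar invariant ``distance to the Jung centre'' from \cref{thm:main} with a second, finer invariant --- for instance a suitable discretisation of the direction from the Jung centre to $x$ --- and then, via a compactness argument analogous to \cref{claim:key}, to show that at least one of the two invariants must differ between two vertices of any congruent copy of $A$ inside $B$.

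The main obstacle, and the reason this is still a conjecture, is the regime $\diam A / 2 < \circr A \leq \diam A / \sqrt 2$, which contains all obtuse triangles with largest angle in $(90^\circ, 135^\circ]$ and is precisely the window left open by \cref{thm:diamramsey}. In this range an isometric copy of $A$ can legitimately lie on a sphere centred at the Jung centre of $B$, so the distance-from-the-origin colouring of \cref{thm:main} collapses on such copies. One therefore needs a finite, congruence-invariant colouring that is nonetheless forced to vary across every embedding of $A$ into an arbitrary finite set of the same diameter, and in particular whose number of colours is independent of the ambient dimension $D$; constructing such a colouring looks like the crux of the problem.
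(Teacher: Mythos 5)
This statement is \cref{conj:circumcenter}: it is stated in the paper as an open conjecture, the authors explicitly say they could not resolve even the special case of obtuse triangles, and no proof of either implication appears anywhere in the paper. Your proposal is candid about this --- it is a research programme, not a proof --- but as a submission for the statement it therefore has a genuine and unavoidable gap: neither direction is actually established. On the forward direction, the identity you invoke is correct (expanding $\norm{x-p_i'}^2$ about the circumcentre and using $\sum_i \mu_i (p_i'-c') = 0$ and $\sum_i \mu_i = 1$ gives $\sum_i \mu_i \norm{x-p_i'}^2 = \norm{x-c'}^2 + \circr{A}^2$), and nonnegativity of the $\mu_i$ is indeed what one would hope to exploit; but the construction of \cite{frankl2017} is only known to yield diameter-Ramseyness for right and acute \emph{triangles} and for $\e$-almost regular simplices with $\e \leq 1/\binom{d+1}{2}$, and nobody has shown that $\mu_i \geq 0$ alone suffices to control the diameter of the product-plus-Hales--Jewett configuration for a general simplex. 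Even ``every acute simplex is diameter-Ramsey'' is open.

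On the reverse direction the obstacle you name is exactly the right one, but your proposed remedy does not work as described. A ``discretisation of the direction from the Jung centre to $x$'' requires a finite net on the sphere $S^{D-1}$, so the number of colours grows with the ambient dimension $D$; since the definition of diameter-Ramsey allows $B \subset \R^n$ for arbitrary $n$, any such colouring fails for the same reason that the colouring of \cref{thm:main} must be radial: the only functions on $B_D(r)$ that are invariant under the full orthogonal group (which one needs, since a congruent copy of $A$ can be rotated arbitrarily) are functions of $\norm{x}$ alone, and for $\diam A /2 < \circr A \leq \diam A/\sqrt 2$ a copy of $A$ can lie on a single such sphere. Breaking this symmetry with a dimension-independent, congruence-sensitive invariant is precisely the unsolved crux, and the proposal offers no candidate that survives the $D \to \infty$ requirement. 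In short: the paper proves nothing here, and neither does the proposal; what you have written is a reasonable discussion of why \cref{conj:circumcenter} is hard, not a proof of it.
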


Furthermore, it would be interesting to close the gap between \cref{cor:simplex} and the related result from \cite{frankl2017}.

\begin{prob}
For every $d \in \N$, determine the largest $\e = \e(d) >0$, such that every $\e$-almost-regular simplex is diameter-Ramsey.
\end{prob}

Note that, provided \cref{conj:circumcenter} is true, a similar construction as in the proof of \cref{cor:simplex} shows that the result in \cite{frankl2017} is best possible, i.e.\ $\e(d) = 1/\binom{d}{2}$.

\section*{Acknowledgement}
The authors would like to thank their supervisors Peter Allen, Julia B\"ottcher, Jozef Skokan and Konrad Swanepoel for their helpful comments on this note.

\bibliographystyle{amsplain}
\bibliography{./input/bibinits}
\end{document}